\newtheorem{theorem}{Theorem}[section]
\newtheorem{lemma}[theorem]{Lemma}
\newtheorem{corollary}[theorem]{Corollary}
\newtheorem{proposition}[theorem]{Proposition}
\theoremstyle{definition}
\newtheorem{definition}[theorem]{Definition}
\newcommand{\C}{\mathbb{C}}
\newcommand{\R}{\mathbb{R}}
\newcommand{\bthm}{\begin{theorem}}
\newcommand{\ethm}{\end{theorem}}
\newcommand{\blem}{\begin{lemma}}
\newcommand{\elem}{\end{lemma}}
\newcommand{\bcor}{\begin{corollary}}
\newcommand{\ecor}{\end{corollary}}
\newcommand{\bprop}{\begin{proposition}}
\newcommand{\eprop}{\end{proposition}}
\newcommand{\bdefn}{\begin{definition}}
\newcommand{\edefn}{\end{definition}}
\newcommand{\bpf}{\begin{proof}}
\newcommand{\epf}{\end{proof}}
\def\vep {\varepsilon}
\def \sm {\setminus}
\newcommand{\ra}{\rightarrow}
\newcommand{\ol}{\overline}
\begin{document}
\title{Every connected bounded domain of holomorphy has connected boundary}

\author{Alexander J. Izzo}
\address{Department of Mathematics and Statistics, Bowling Green State University, Bowling Green, OH 43403}
\email{aizzo@bgsu.edu}
\thanks{The author was partially supported by NSF Grant DMS-1856010.}

\subjclass[2020]{Primary 32D05, 32T05}
\keywords{domain of holomorphy, connected, boundary}

\begin{abstract}
It is shown that every connected, bounded domain of holomorphy in $\C^n$, $n\geq 2$, has connected boundary.
\end{abstract}

\maketitle

%
%
%
%

\section{Introduction}

In this short note we prove the following theorem.  It is surprising that this result seems not to have appeared in the literature earlier.

\begin{theorem}\label{main-theorem}
Every connected, bounded domain of holomorphy in $\C^n$, $n\geq 2$, has connected boundary.
\end{theorem}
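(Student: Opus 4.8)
The plan is to translate the purely topological conclusion into (co)homology and then let the two features of the hypothesis---that $\Omega$ is Stein and that $\dim_\C \Omega = n \ge 2$---enter through a single degree count. Since $\Omega$ is a nonempty bounded open set, $\overline\Omega$ is compact and connected and $\partial\Omega$ is a nonempty compact set, and the assertion ``$\partial\Omega$ is connected'' is equivalent to $\widetilde H^0(\partial\Omega)=0$, where throughout I use reduced \v{C}ech cohomology with integer coefficients (the \v{C}ech theory is what I want, because $\partial\Omega$ may be wild and I wish to assume no boundary regularity). I would compactify, writing $S=S^{2n}=\C^n\cup\{\infty\}$, so that $\overline\Omega$ and $\partial\Omega$ become compact subsets of the sphere $S$.

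First I would apply Alexander duality in $S$ to the compact set $\partial\Omega$. In the form valid for arbitrary compact subsets (via \v{C}ech cohomology) it reads
\[
\widetilde H^{0}(\partial\Omega)\;\cong\;\widetilde H_{2n-1}\big(S\setminus\partial\Omega\big).
\]
Now $S\setminus\partial\Omega=\Omega\sqcup E$ is a disjoint union of the two open subsets $\Omega$ and $E:=S\setminus\overline\Omega$ of the manifold $S$, so in the positive degree $2n-1$ the right-hand side splits as $H_{2n-1}(\Omega)\oplus H_{2n-1}(E)$ (ordinary singular homology, unambiguous since these are open manifolds). Thus it suffices to prove that both summands vanish.

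The term $H_{2n-1}(\Omega)$ is where the hypotheses do their work. A bounded domain of holomorphy in $\C^n$ is a Stein manifold of complex dimension $n$, and by the Andreotti--Frankel theorem a Stein manifold of complex dimension $n$ has the homotopy type of a CW complex of real dimension at most $n$; hence $H_k(\Omega)=0$ for every $k>n$. Since $n\ge 2$ we have $2n-1\ge n+1>n$, so $H_{2n-1}(\Omega)=0$. This is exactly the step that must fail for a Hartogs figure such as the shell $\{1<|z|<2\}$, which retracts onto $S^{2n-1}$ and is not a domain of holomorphy. For the remaining term I would invoke Alexander duality a second time, now for the compact set $\overline\Omega\subset S$:
\[
\widetilde H_{2n-1}(E)=\widetilde H_{2n-1}\big(S\setminus\overline\Omega\big)\;\cong\;\widetilde H^{0}(\overline\Omega).
\]
Because $\Omega$ is connected, so is $\overline\Omega$, whence $\widetilde H^{0}(\overline\Omega)=0$ and therefore $H_{2n-1}(E)=0$. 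Combining the two vanishings with the displayed splitting gives $\widetilde H^{0}(\partial\Omega)=0$, that is, $\partial\Omega$ is connected.

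The substance of the argument, as opposed to the bookkeeping, lies in two places. The first is the appeal to Andreotti--Frankel (equivalently, to the existence of a strictly plurisubharmonic exhaustion whose Morse indices are $\le n$): this is precisely where both the holomorphy hypothesis and the condition $n\ge 2$ are consumed, and dropping $n\ge 2$ collapses the degree $2n-1$ down to $n$, matching the failure of the theorem in $\C^1$, where an annulus is a planar domain of holomorphy with disconnected boundary. The second is the insistence on the \v{C}ech form of Alexander duality, so that no local contractibility of $\partial\Omega$ or $\overline\Omega$ is required; I would be careful to cite the general compact-set version rather than the locally contractible one. I expect the main obstacle to be expository rather than technical---assembling these standard but separately proved ingredients and verifying the degree inequalities---rather than any single delicate estimate.
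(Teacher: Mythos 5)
Your proof is correct, and it takes a genuinely different route from the paper. The paper's argument has two steps: an elementary topological lemma asserting that a bounded connected subset of $\R^N$ with \emph{connected complement} has connected boundary (proved via Mayer--Vietoris applied to the $\varepsilon$-neighborhoods of the set and of its complement, followed by a nested intersection of compact connected sets), and then a proof by contradiction that $\C^n\setminus\Omega$ is connected: a separation of the complement would yield a compact set $K$ and an open set $U$ with $U\setminus K=\Omega$ connected, so Hartogs' extension theorem would extend every holomorphic function on $\Omega$ across $K$, contradicting that $\Omega$ is a domain of holomorphy. Thus the paper's only input from several complex variables is Hartogs' extension theorem, and its topology is elementary. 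You instead compute $\check{H}^0(\partial\Omega)$ directly by applying \v{C}ech--Alexander duality in $S^{2n}$ twice (to $\partial\Omega$ and to $\overline\Omega$) and killing the summand $H_{2n-1}(\Omega)$ by Andreotti--Frankel; note that this also quietly uses the Cartan--Thullen theorem (domain of holomorphy $\Rightarrow$ holomorphically convex $\Rightarrow$ Stein, separability being automatic for open subsets of $\C^n$), which you should cite explicitly since Andreotti--Frankel is stated for Stein manifolds. The trade-offs: your route invokes substantially heavier machinery, but it is a direct computation rather than a contradiction, it makes the role of $n\ge 2$ completely transparent as the degree inequality $2n-1>n$, and it never needs to know that $\C^n\setminus\Omega$ is connected --- a fact that is the pivot of the paper's proof and which the paper obtains as a byproduct but your argument does not yield. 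Conversely, the paper's proof is short, self-contained, and accessible with far less background. Your insistence on the \v{C}ech form of Alexander duality is exactly right, since $\partial\Omega$ and $\overline\Omega$ carry no regularity hypotheses; the only other point to make explicit in a final write-up is that $\partial\Omega\neq\emptyset$ (which follows from $\Omega$ being bounded and nonempty), so that the vanishing of reduced $\check{H}^0$ indeed means connectedness.
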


We will show that the theorem follows readily from the well-known Hartog's extension theorem and a simple lemma about connected sets in Euclidean space.  The form of Hartog's extension theorem that we will use is the following.  It can be found, with proof, in many sources, for instance \cite[Theorem~E6]{Gunning} and \cite[Theorem~16.3.6]{Rudin}.

\begin{theorem}[Hartog's extension theorem]
If $K$ is a compact subset of an open set $U\subset\C^n$ with $n\geq 2$ and if $U\sm K$ is connected, then any function that is holomorphic on $U\sm K$ extends to a function that is holomorphic on $U$.
\end{theorem}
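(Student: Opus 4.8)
The plan is to argue by contradiction using Hartog's extension theorem. Suppose $\Omega\subseteq\C^n$ (which we identify with $\R^{2n}$) is a connected, bounded domain of holomorphy whose boundary $\partial\Omega$ is disconnected. Since $\Omega$ is a domain of holomorphy, by definition there is a function $f$ holomorphic on $\Omega$ that cannot be continued holomorphically past any point of $\partial\Omega$. The entire argument reduces to producing a Hartog configuration straddling the boundary: a bounded open set $U$ and a nonempty compact set $K\subseteq U$ with $U\sm K=\Omega$ and $K\cap\partial\Omega\neq\emptyset$. Granting this, $U\sm K=\Omega$ is connected (because $\Omega$ is), so Hartog's theorem, for which we use $n\geq 2$, extends $f|_{U\sm K}$ to a function $F$ holomorphic on all of $U$. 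Choosing any $p\in K\cap\partial\Omega\subseteq U$, we observe that $F$ is holomorphic at $p$ and agrees with $f$ on $\Omega$ near $p$ (the points of $\Omega$ near $p$ lie in $U$ and not in $K=U\sm\Omega$, hence in $U\sm K$, where $F=f$). Thus $F$ continues $f$ holomorphically across $p$, contradicting the choice of $f$.

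The crux is therefore the construction of $U$ and $K$, and this is where a purely topological lemma about connected sets in $\R^{2n}$ does the work. First I would separate the boundary: since $\partial\Omega$ is compact and disconnected, I can write $\partial\Omega=X\sqcup Y$ with $X,Y$ nonempty, disjoint, and compact (taking $X$ and $Y$ to be complementary relatively clopen subsets of $\partial\Omega$), so that $\delta:=\operatorname{dist}(X,Y)>0$. Let $E_\infty$ denote the unbounded connected component of $\C^n\sm\overline\Omega$, and recall the standard fact that the boundary of any connected component of an open set is contained in the boundary of that set, so that $\partial E_\infty\subseteq\partial\Omega$. I would then arrange the labels so that $\partial E_\infty\subseteq Y$; informally, $Y$ is the part of $\partial\Omega$ that ``faces infinity,'' leaving $X$ as an interior-facing piece of boundary that encloses a bounded hole.

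The idea is then to fill in that hole. Let $H$ be the union of $X$ with all the bounded complementary components of $\overline\Omega$ that are cut off from $E_\infty$ by $X$, and set $U:=\Omega\cup H$ and $K:=H$. The content of the topological lemma is that, for the right choice of $X$, the set $U$ is open, the set $K$ is compact, and $U\sm K=\Omega$; here the positive separation $\delta$ is what keeps the filled core $H$ bounded away from $Y$, so that adjoining it to $\Omega$ reaches neither the remaining boundary $Y$ nor the point at infinity. I expect this verification to be the main obstacle: one must show that disconnectedness of $\partial\Omega$ genuinely forces an enclosed bounded hole, and that filling exactly one side yields an open $U$ with $U\sm K=\Omega$ and $K$ compact. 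This step is elementary but must be carried out with care, and it is precisely the ``simple lemma about connected sets'' advertised in the introduction. Once it is established, the contradiction via Hartog's theorem is immediate as described above, completing the proof.
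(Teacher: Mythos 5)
Your proposal does not prove the statement it was asked to prove. The statement is Hartogs' extension theorem itself, and your argument \emph{invokes} Hartogs' extension theorem as a known tool in order to derive the paper's main theorem (that a connected, bounded domain of holomorphy has connected boundary). As a proof of the extension theorem this is circular: nowhere do you extend a holomorphic function across a compact set by analytic means. Indeed, no argument of the kind you sketch can succeed, because everything you use is purely topological, and the topology of the configuration $(U,K)$ is identical in $\C^1$, where the theorem is false (e.g., $U=\D$, $K=\{0\}$, $f(z)=1/z$). Any genuine proof must inject several-variables analysis where the hypothesis $n\geq 2$ does real work --- for instance Hartogs' original device of expanding $f$ in a Laurent series in one variable with coefficients holomorphic in the remaining variables and showing the negative-order coefficients vanish, or Ehrenpreis's argument solving $\bar\partial u=f\,\bar\partial\chi$ with \emph{compactly supported} $u$ (possible only for $n\geq 2$), or the Bochner--Martinelli integral representation. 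Note also that the paper itself offers no proof of this statement: it is quoted from the literature, with references to Gunning and to Rudin, precisely because its proof requires machinery of this kind.

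A secondary remark, in case your intent was to prove Theorem 1.1 of the paper: even read that way, your sketch has gaps at exactly the points you flag. The paper's route is cleaner: it first proves, via a Mayer--Vietoris argument, that a bounded connected set $E\subset\R^N$ with connected complement has connected boundary, and then needs only to separate $\C^n\sm\Omega$ (not $\partial\Omega$) into $K$ and $L$, taking $U=\C^n\sm L$; with that choice, openness of $U$, compactness of $K$, and $U\sm K=\Omega$ are immediate, with no ``filling in the hole'' construction to verify. By contrast, your step ``arrange the labels so that $\partial E_\infty\subseteq Y$'' is unjustified as written: it requires $\partial E_\infty$ to lie in a single clopen piece of $\partial\Omega$, which amounts to knowing $\partial E_\infty$ is connected --- a nontrivial fact (it follows from unicoherence of $S^N$) of the same depth as the lemma you are trying to avoid. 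So even for the main theorem, the proposal defers its hardest step rather than completing it.
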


The lemma about connected sets in Euclidean space that we need is the following.

\begin{lemma}
Let $E$ be a bounded set in $\R^N$, $N\geq 2$, that is connected and has connected complement.  Then the boundary of $E$ is connected.
\end{lemma}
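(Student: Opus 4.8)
The plan is to recognize the lemma as the assertion that $\R^N$ is \emph{unicoherent} and to prove it with the standard circle-valued-map device. First I would write the boundary as an intersection of two connected closed sets: $\partial E = \overline E \cap \overline{\R^N \sm E}$, where $\overline E$ is connected because $E$ is, and $\overline{\R^N \sm E}$ is connected because $\R^N \sm E$ is connected by hypothesis. Their union is all of $\R^N$, and, since $E$ is bounded, $\partial E$ is compact. Thus it suffices to show that the intersection of two connected closed subsets of $\R^N$ whose union is $\R^N$ is connected.

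To prove this I would argue by contradiction, manufacturing from a hypothetical disconnection a continuous map $\R^N \to S^1$ that cannot exist. Suppose $\partial E = P \sqcup Q$ with $P,Q$ nonempty, disjoint, and closed. The subspaces $\overline E$ and $\overline{\R^N \sm E}$ are metric, hence normal, so Urysohn's lemma supplies continuous functions $\phi\colon \overline E \to [0,1]$ and $\psi\colon \overline{\R^N \sm E}\to[0,1]$, each equal to $0$ on $P$ and to $1$ on $Q$. Writing $S^1 = \R/\Z$ and letting $[\,\cdot\,]$ denote the quotient map $\R \to \R/\Z$, I would define $F\colon \R^N \to S^1$ by $F = [\phi]$ on $\overline E$ and $F = [-\psi]$ on $\overline{\R^N \sm E}$. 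On the overlap $\overline E \cap \overline{\R^N \sm E} = P \cup Q$ both expressions equal $[0]$ (note that $[1]=[-1]=[0]$), so, as $\overline E$ and $\overline{\R^N \sm E}$ are closed and cover $\R^N$, the pasting lemma makes $F$ continuous.

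The contradiction then comes from lifting. Since $\R^N$ is simply connected and locally path connected and $\R \to S^1$ is the universal cover, $F$ lifts to a continuous $\tilde F\colon \R^N \to \R$. On the connected set $\overline E$ both $\tilde F$ and $\phi$ lift $F|_{\overline E}$, so they differ by a constant integer $n$; likewise $\tilde F = -\psi + m$ on the connected set $\overline{\R^N \sm E}$ for some integer $m$. Evaluating at a point of $P$ gives $n = m$, while evaluating at a point of $Q$ gives $1 + n = -1 + m$; together these force $1 = -1$, which is absurd. Hence $\partial E$ is connected.

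The real content, and the step I expect to be the crux, is the gluing construction with the sign reversal on the second piece: it is precisely this twist that converts a would-be separation of $\partial E$ into a circle-valued map detecting a nonexistent loop, and it is where the triviality of $\pi_1(\R^N)$ enters. I note that the argument does not actually require $N \geq 2$; that hypothesis appears because the conclusion is only non-vacuous there (in $\R^1$ no bounded set has both itself and its complement connected) and because it matches the dimension $2n$ arising in the intended application.
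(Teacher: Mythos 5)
Your proof is correct, but it takes a genuinely different route from the paper's. You decompose $\R^N$ into the two closed connected sets $\overline{E}$ and $\overline{\R^N \setminus E}$, whose intersection is $\partial E$, and run the classical unicoherence argument: a hypothetical separation $P \sqcup Q$ yields, via Urysohn functions and your sign-reversed gluing, a continuous circle-valued map on $\R^N$; lifting it through $\R \to \R/\Z$ (possible because $\pi_1(\R^N)=0$) forces the contradiction $n = m$ and $m = n+2$. The paper instead fattens $E$ and its complement to open $\vep$-neighborhoods $E(\vep)$ and $F(\vep)$ --- an open cover of $\R^N$, to which the Mayer--Vietoris sequence applies directly --- concluding from $H_1(\R^N)=0$ that each $E(\vep) \cap F(\vep)$ is connected, and then recovers $\partial E$ as the nested intersection $\bigcap_{\vep>0} \overline{E(\vep) \cap F(\vep)}$ of compact connected sets. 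So the same underlying fact (unicoherence of $\R^N$) is detected in $H_1$ by the paper and in $\pi_1$ by you, and the trade-off is visible in the hypotheses: the paper's price for working with open sets is the limiting argument at the end, which is precisely where boundedness of $E$ is used (compactness is what keeps the nested intersection connected), whereas your argument works directly with the closed sets, needs no limiting step, and --- as you correctly observe --- never uses boundedness or $N \geq 2$ at all, so it proves the slightly stronger statement that any connected subset of $\R^N$ with connected complement has connected boundary. The gluing consistency check on both $P$ and $Q$ (using $[1]=[-1]=[0]$), which you flag as the crux, is carried out correctly.
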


\begin{proof}
Denote the complement of $E$ in $\R^N$ by $F$.  Given $\vep>0$ and a set $A$ in $\R^N$, denote by $A(\vep)$ the $\vep$-neighborhood of $A$, i.e., the set of points in $\R^N$ whose distance from $A$ is strictly less than $\vep$.  Then $E(\vep)$ and $F(\vep)$ are connected, open sets whose union is $\R^N$.  Consider the following segment of the the Mayer-Vietoris exact sequence in reduced singular homology (with integer coefficients) for the excisive couple $(E(\vep), F(\vep))$:
$$\cdots\ra H_1(\R^N)\ra \tilde H_0\bigl(E(\vep)\cap F(\vep)\bigr)\ra \tilde H_0\bigl(E(\vep)\bigr) \oplus \tilde H_0\bigl(F(\vep)\bigr)\ra\cdots.$$
The groups at the ends are zero, so the group in the middle must be zero also.  Thus $E(\vep) \cap F(\vep)$ is connected, and hence so is the closure $\ol{E(\vep)\cap F(\vep)}$ of $E(\vep) \cap F(\vep)$.

The collection $\bigl\{ \ol{E(\vep)\cap F(\vep)}: \vep>0\bigr\}$ is a totally ordered collection of compact, connected sets.  Therefore, the intersection $\bigcap\limits_{\vep>0} \ol{
E(\vep)\cap F(\vep) 
}$ is connected as well.  But this intersection is exactly the boundary of $E$.
\end{proof}

\begin{proof}[Proof of Theorem~\ref{main-theorem}]
Let $\Omega$ be a bounded, connected domain of holomorphy in $\C^n$, $n\geq 2$.  By the lemma, it suffices to show that the complement of $\Omega$ is connected.  Assume to get a contradiction that $\C^n\sm \Omega$ is not connected.  Let $K$ and $L$ form a separation of $\C^n\sm \Omega$ with the unbounded component of $\C^n\sm \Omega$ contained in $L$.  Set $U=\C^n\sm L$.  Then $K$ is a compact subset of the open set $U\subset\C^n$, and $U\sm K=\Omega$ is connected.  Thus Hartog's extension theorem shows that every holomorphic function on $\Omega$ extends to a holomorphic function on $U$,  contradicting the hypothesis that $\Omega$ is a domain of holomorphy.  Thus $\C^n\sm\Omega$ must be connected.
\end{proof}

\end{document}